\title{The problems of
classifying pairs of forms and local
algebras with zero cube radical are
wild\footnotetext{This is the authors' version of a work that was published in Linear Algebra Appl. 402 (2005) 135--142.}}
\author{Genrich Belitskii%
\thanks{Partially supported by
Israel Science Foundation, Grant
186/01.}\\ Dept. of Mathematics,
Ben-Gurion University of the Negev\\
Beer-Sheva 84105, Israel,
 genrich@cs.bgu.ac.il
 \and
Vitalij M. Bondarenko
\\ Institute of Mathematics,
Tereshchenkivska 3, Kiev,
Ukraine\\vit-bond@imath.kiev.ua
  \and
Ruvim Lipyanski\\ Dept. of Mathematics,
Ben-Gurion University of the Negev\\
Beer-Sheva 84105, Israel,
 lipyansk@cs.bgu.ac.il
 \and
Vladimir V. Plachotnik \\
Mech.-Math. Faculty,
Kiev National University,\\
Vladimirskaja 64, Kiev, Ukraine
 \and
Vladimir V. Sergeichuk%
\thanks{Corresponding author.
Partially supported by Ukrainian
SFFR, Grant 01.07/00132.}\\
Institute of Mathematics,
Tereshchenkivska 3, Kiev,
Ukraine\\sergeich@imath.kiev.ua}
\date{Dedicated to C. M. Ringel
on the occasion of his 60th
birthday}
\DeclareMathOperator{\rank}{rank}
\DeclareMathOperator{\Rad}{Rad}
\DeclareMathOperator{\diag}{diag}
\renewcommand{\ge}{\geqslant}
\newtheorem{theorem}{Theorem}
\newtheorem{lemma}[theorem]{Lemma}
\theoremstyle{definition}
\theoremstyle{remark}
\begin{document}
\maketitle

\begin{abstract}
We prove that over an algebraically
closed field of characteristic not
two the problems of classifying
pairs of sesquilinear forms in which
the second is Hermitian, pairs of
bilinear forms in which the second
is symmetric (skew-symmetric), and
local algebras with zero cube
radical and square radical of
dimension $2$ are hopeless since
each of them reduces to the problem
of classifying pairs of $n$-by-$n$
matrices up to simultaneous
similarity.

{\it AMS classification:} 17B30; 15A21;
16G60

{\it Keywords:} Local algebras; Pairs
of bilinear forms; Wild problems
 \end{abstract}



\section{Introduction}
\label{s0}

All matrices, vector spaces, and
algebras are considered over an
algebraically closed field $\mathbb
F$ of characteristic not two.

The problem of classifying pairs of
$n\times n$ matrices up to similarity
transformations
\begin{equation*}\label{0a}
(A,B)\longmapsto
(S^{-1}AS,\,S^{-1}BS),
\qquad\text{$S$\/ is nonsingular},
\end{equation*}
is hopeless since it contains the
problem of classifying any system of
linear operators and the problem of
classifying representations of any
finite-dimensional algebra; see
\cite{bel-ser}. Classification
problems that contain the problem of
classifying pairs of matrices up to
similarity are called \emph{wild}
and the others are called
\emph{tame}; see strict definitions
in \cite{rin}.

We prove that the problem of
classifying local algebras $\Lambda$
with $(\Rad\Lambda)^3=0$ and $\dim
(\Rad\Lambda)^2=2$ is wild (Theorem
\ref{theor}). Recall that an
\emph{algebra} $\Lambda$ over
$\mathbb F$ is a finite dimensional
vector space being also a ring with
respect to the same addition and
some multiplication such that
\begin{equation*}\label{eqy}
a (uv )=(a u)v =u (av)\qquad
\text{for all $a\in\mathbb F$ and
$u,v\in\Lambda$}.
\end{equation*}
An algebra $\Lambda$ is \emph{local}
if the set $R$ of its noninvertible
elements is closed under addition
(then $R$ is the \emph{radical} of
$\Lambda$ and is denoted by
$\Rad\Lambda$).

We prove in passing the wildness of
the problems of classifying
\begin{itemize}
  \item[(i)]
pairs of sesquilinear forms, in
which the second is Hermitian (with
respect to a nonidentity involution
on $\mathbb F$),
  \item[(ii)]
pairs of bilinear forms, in which
the second is symmetric, and
  \item[(iii)]
pairs of bilinear forms, in which
the second is skew-symmetric.
\end{itemize}
The hopeless of the problems of
classifying tuples (i)--(iii) was
also proved in \cite{ser2} by
another method (which was used in
\cite{ser1} too): each of them
reduces to the problem of
classifying representations of a
wild quiver.

Belitskii, Lipyanski, and Sergeichuk
worked on these problems when
Sergeichuk was visiting the
Ben-Gurion University of the Negev
in November and December 2003. They
discussed applications of
\cite[Theorem 4.5]{bel-ser} stating
that the problem of classifying
tensors $T\in U^*\otimes U^*\otimes
U$ on a vector space $U$ is wild
(such a tensor determines a bilinear
binary operation on $U$). Then these
authors knew that the wildness of
the problem of classifying algebras
was also proved by Bondarenko and
Plachotnik using another reduction
to a matrix problem. So we decided
to write this paper jointly.

\section{Pairs of forms}
\label{sec1}

Let $a\mapsto \bar{a}$ be any
involution on $\mathbb F$, that is,
a bijection $\mathbb F\to\mathbb F$
such that
\begin{equation*}\label{1ig}
 \overline{a+b}= \bar{a}+\bar{b},
 \quad \overline{ab}=\bar{a}
 \bar{b},\quad
 \bar{\bar{a}}=a.
\end{equation*}
For a matrix $A=[a_{ij}]$ we define
$ A^*:=\bar{A}^T =[\bar{a}_{ji}].$
If $S^*AS=B$ for a nonsingular
matrix $S$, then $A$ and $B$ are
said to be *{\it\!congruent} (the
involution $a\mapsto \bar{a}$ can be
the identity; we consider congruence
of matrices as a special case of
*congruence).

Each matrix tuple in this paper is
formed by matrices of the same size,
which is called the size of the
tuple. Denote
\[
R(A_1,\dots,A_t):=
(RA_1,\dots,RA_t), \qquad
(A_1,\dots,A_t)S:=
(A_1S,\dots,A_tS).
\]
We say that matrix tuples
$(A_1,\dots,A_t)$ and
$(B_1,\dots,B_t)$ are
\emph{equivalent} and write
\begin{equation}\label{er}
(A_1,\dots,A_t)\sim
(B_1,\dots,B_t)
\end{equation}
if there are nonsingular $R$ and $S$
such that
\[
R(A_1,\dots,A_t)S=(B_1,\dots,B_t).
\]
These tuples are *\!\emph{congruent}
if $R=S^*$.

For each $\varepsilon\in\mathbb F$,
define the pair
\begin{equation}\label{eqv1}
{\cal T}_{\varepsilon}(x,y)= \left(
\left[\begin{array}{c|c}
  \text{\Large\rm
0}&\begin{matrix}
    1&0\\0&1
  \end{matrix}\\ \hline
  \begin{matrix}
    2&1\\0&2
  \end{matrix}&\text{\Large\rm
0}
\end{array}\right],\
\left[\begin{array}{c|c}
  \text{\Large\rm
0}&\begin{matrix}
    x&0\\0&y
  \end{matrix}\\ \hline
  \begin{matrix}
    \varepsilon x^*&0\\
    0& \varepsilon y^*
  \end{matrix}&\text{\Large\rm
0}
\end{array}\right]\right)
\end{equation}
of polynomial matrices in $x,\ y,\
x^*$, and $y^*$. Then
\begin{equation}\label{eqv2}
{\cal T}_{\varepsilon}(A,B):=\left(
\left[\begin{array}{c|c}
  \text{\Large\rm
0}&\begin{matrix}
    I_n&0\\0&I_n
  \end{matrix}\\ \hline
  \begin{matrix}
    2I_n&I_n\\0&2I_n
  \end{matrix}&\text{\Large\rm
0}
\end{array}\right],\
\left[\begin{array}{c|c}
  \text{\Large\rm
0}&\begin{matrix}
    A&0\\0&B
  \end{matrix}\\ \hline
  \begin{matrix}
    \varepsilon A^*&0\\
    0& \varepsilon B^*
  \end{matrix}&\text{\Large\rm
0}
\end{array}\right]\right)
\end{equation}
for each pair $(A,B)$ of $n$-by-$n$
matrices.

The statement (a) of the following
theorem is used in the next section.

\begin{theorem}\label{th2.1}
{\rm(a)} For each
$\varepsilon\in\mathbb F$, matrix
pairs $(A,B)$ and $(C,D)$ over
$\mathbb F$ are similar if and only
if ${\cal T}_{\varepsilon}(A,B)$ and
${\cal T}_{\varepsilon}(C,D)$ are
{\rm *}\!congruent.

{\rm(b)} The problems of classifying
tuples {\rm(i)--(iii)} from Section
\ref{s0} are wild.
\end{theorem}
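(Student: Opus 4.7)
The plan is to prove (a) directly by a ``cosquare'' argument, and then derive (b) by specializing $\varepsilon$ and the involution to read off the three symmetry classes.

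Write $M$ for the fixed first matrix of $\mathcal{T}_\varepsilon$, $N(A,B)$ for its second matrix, $J=\bigl(\begin{smallmatrix}2I_n&I_n\\0&2I_n\end{smallmatrix}\bigr)$ for the lower-left block of $M$, and $D(A,B):=\mathrm{diag}(A,B)$ for the upper-right block of $N(A,B)$. For the \emph{only if} direction of (a), given $T^{-1}AT=C$ and $T^{-1}BT=D$, I would exhibit the explicit block-diagonal matrix $S=\mathrm{diag}(T^*,T^*,T^{-1},T^{-1})$ (in $n\times n$ blocks): one checks that $\mathrm{diag}(T^*,T^*)$ commutes with $J$, whence $S^*MS=M$, and $S^*N(A,B)S=N(C,D)$ is then a direct block multiplication.

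For the \emph{if} direction, suppose $S^*MS=M$ and $S^*N(A,B)S=N(C,D)$. The first equation together with its *-adjoint $S^*M^*S=M^*$ implies, by a short manipulation, that $S$ commutes with the \emph{cosquare}
\[
H := M^{-*}M = \begin{pmatrix} J & 0 \\ 0 & J^{-*} \end{pmatrix}.
\]
The two diagonal blocks have spectra $\{2\}$ and $\{1/2\}$ respectively; the specific constant $2$ in $J$ was chosen so that these are disjoint. By Sylvester's theorem on commuting matrices, $S$ must therefore be block-diagonal in the $2n+2n$ decomposition: $S=\mathrm{diag}(S_1,S_2)$ with $S_1$ commuting with $J$. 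Since the centralizer of $J$ consists exactly of matrices of the form $\bigl(\begin{smallmatrix}X&Y\\0&X\end{smallmatrix}\bigr)$ with $X$ invertible, this pins down the form of $S_1$ completely.

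Plugging this rigid form for $S$ back into $S^*MS=M$ collapses the equation to $S_2=S_1^{-*}$; plugging it into $S^*N(A,B)S=N(C,D)$ collapses to $S_1^*D(A,B)=D(C,D)S_1^*$. Expanding the latter blockwise with $S_1^*=\bigl(\begin{smallmatrix}X^*&0\\Y^*&X^*\end{smallmatrix}\bigr)$ yields the three equations $X^*A=CX^*$, $X^*B=DX^*$, and $Y^*A=DY^*$; the first two exhibit the invertible $X^*$ as the simultaneous similarity showing $(A,B)\sim(C,D)$, completing (a). For part (b), I apply (a) with the natural specializations: $\varepsilon=1$ with a nonidentity involution makes $N(A,B)$ Hermitian (case (i)); $\varepsilon=1$ with the identity involution makes it symmetric (case (ii)); $\varepsilon=-1$ with the identity involution makes it skew-symmetric (case (iii)). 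In each case $M$ is an unrestricted first form of the appropriate type, and (a) provides a faithful reduction from the wild simultaneous-similarity problem to the classification under consideration.

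The main obstacle is the cosquare/Sylvester step: its validity depends on the spectra of $J$ and $J^{-*}$ being disjoint, which holds in all relevant characteristics outside the degenerate case $\mathrm{char}\,\mathbb{F}=3$ (where $2=1/2$). In that one case the constant $2$ in $J$ must be replaced by some $\lambda\in\mathbb{F}$ with $\lambda^2\ne 1$, which exists because $\mathbb{F}$ is algebraically closed; once this is done the same argument goes through verbatim.
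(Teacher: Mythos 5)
Your proof is correct in substance, but the crucial ``if'' direction goes by a genuinely different route from the paper's. The paper never analyzes the *congruence matrix $S$ directly: it adjoins $M^*$ as a third matrix to form the triple ${\cal P}_\varepsilon$, passes from *congruence to mere equivalence, splits the block-antidiagonal triple as a direct sum ${\cal F}\oplus{\cal G}_\varepsilon$, and invokes Krull--Schmidt for quiver representations (Lemma~\ref{lem2}(a)) to cancel the ${\cal G}$-part --- the eigenvalues $2$ and $1/2$ enter there, in showing ${\cal F}$ and ${\cal G}_\varepsilon$ share no summand --- after which equating the components of ${\cal F}(A,B)\sim{\cal F}(C,D)$ forces the transforming matrix into the same block-triangular shape you obtain. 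Your cosquare argument ($S$ commutes with $M^{-*}M=\diag(J,J^{-*})$, the Sylvester equation kills the off-diagonal blocks because the spectra $\{2\}$ and $\{1/2\}$ are disjoint, and the centralizer of $J$ pins down $S_1$, whence $S_2=S_1^{-*}$ and $X^*$ is the desired similarity) reaches the same conclusion by pure matrix computation, with no Krull--Schmidt input; it is more elementary and makes completely explicit where the constant $2$ is used. The paper's machinery, on the other hand, is reused verbatim in its Section~\ref{s1}, which is why it is set up as a lemma. Two small remarks. First, in the ``only if'' direction your $S=\diag(T^*,T^*,T^{-1},T^{-1})$ sends $N(A,B)$ to $N(TAT^{-1},TBT^{-1})$, so with your convention $T^{-1}AT=C$ you should instead take $S=\diag((T^*)^{-1},(T^*)^{-1},T,T)$, as in \eqref{equ10}; this is a trivial fix. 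Second, your caveat about characteristic $3$ is well taken, and in fact the paper's own cancellation step rests on exactly the same disjointness $2\ne 1/2$, so it is a caveat for the published argument as much as for yours; just note that your repair (replacing $2$ by some $\lambda$ with $\lambda^2\ne 1$) proves a correct variant of the construction rather than the literal statement involving the matrices \eqref{eqv1} when $\operatorname{char}\mathbb F=3$.
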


Define the \emph{direct sum} of
matrix tuples:
\[
(A_1,\dots,A_t)\oplus(B_1,\dots,B_t)
:=(A_1\oplus B_1,\dots,A_t\oplus
B_t).
\]
A matrix tuple is said to be
\emph{indecomposable with respect to
equivalence} if it is not equivalent
to a direct sum of matrix tuples of
smaller sizes. A tuple of square
matrices is \emph{indecomposable
with respect to {\rm*}\!congruence}
if it is not *congruent to a direct
sum of tuples of square matrices of
smaller sizes.

\begin{lemma} \label{lem2}
{\rm(a)} Each tuple of $m$-by-$n$
matrices is equivalent to a direct
sum of tuples that are
indecomposable with respect to
equivalence. This sum is determined
uniquely up to permutation of
summands and replacement of summands
by equivalent tuples.

{\rm(b)} Each tuple of $n$-by-$n$
matrices is {\rm*}\!congruent to a
direct sum of indecomposable tuples.
This sum is determined uniquely up
to permutation of summands and
replacement of summands by
{\rm*}\!congruent tuples.
\end{lemma}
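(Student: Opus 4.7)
The plan is to reduce both parts to Krull--Schmidt theorems in appropriate additive categories. For \textbf{(a)}, I would identify a tuple $(A_1,\dots,A_t)$ of $m\times n$ matrices with a representation of the quiver $Q_t$ having two vertices $1,2$ and $t$ parallel arrows from $1$ to $2$: place $\mathbb{F}^n$ at vertex $1$, $\mathbb{F}^m$ at vertex $2$, and the map $A_i$ on the $i$-th arrow. Under this identification the equivalence relation (\ref{er}) coincides with isomorphism of representations, the direct sum of tuples matches the direct sum of representations, and the two notions of indecomposability agree. Since finite-dimensional representations of $Q_t$ form an $\mathbb{F}$-linear abelian category equivalent to finite-dimensional modules over the finite-dimensional path algebra $\mathbb{F}Q_t$, the classical Krull--Schmidt theorem---via Fitting's lemma, which gives local endomorphism rings for indecomposables over a finite-dimensional algebra---yields both existence and essential uniqueness of the decomposition.

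For \textbf{(b)}, the difficulty is that *congruence involves the non-linear symmetry $S\mapsto S^*$: the sets $\{f:f^*B_if=A_i\}$ are closed under composition but not under addition, so *congruence is not the isomorphism relation of an obvious $\mathbb{F}$-linear category. My plan is to embed the tuples into the additive category $\mathcal{C}$ whose objects are pairs $(V,\underline{A})$, with $\underline{A}=(A_1,\dots,A_t)$ a tuple of sesquilinear forms on the finite-dimensional space $V$, and whose morphisms are \emph{all} linear maps, equipped with the contravariant duality $f\mapsto f^*$; a *congruence is then an isomorphism of self-dual objects respecting this duality. Existence of a *congruence decomposition into indecomposables follows by straightforward induction on $n=\dim V$.

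The hard part will be uniqueness, which requires a Krull--Schmidt theorem sensitive to the duality. The plan here is to take an ordinary Krull--Schmidt decomposition $1_V=e_1+\cdots+e_k$ of the identity of $\operatorname{End}_{\mathbb F}(V)$ into primitive idempotents adapted to $\underline{A}$, and to arrange, via an averaging step $e\mapsto\tfrac12(e+e^*)$ followed by a Gram--Schmidt-type orthogonalization, that $e_j^*=e_{\sigma(j)}$ for some involution $\sigma$ on $\{1,\dots,k\}$. Fixed points of $\sigma$ then yield self-dual, hence *congruence-indecomposable, summands, while transposition pairs combine into hyperbolic summands. The halving in the averaging step is the essential use of $\operatorname{char}\mathbb{F}\ne 2$, and uniqueness of the resulting *congruence decomposition reduces to uniqueness in the ordinary Krull--Schmidt theorem inside $\operatorname{End}_{\mathbb F}(V)$. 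Since this entire argument is a special case of the general Krull--Schmidt framework for systems of forms and linear mappings developed in \cite{bel-ser}, the lemma may equally be quoted from there.
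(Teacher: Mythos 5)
Your part (a) is exactly the paper's argument: identify a $t$-tuple of $m\times n$ matrices with a representation of the quiver having two vertices and $t$ parallel arrows, and invoke the Krull--Schmidt theorem for the path algebra. Nothing to add there.

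For part (b) the paper proves nothing itself: it quotes the generalized law of inertia from \cite{ser1} (Theorem 2 and \S 2 of ``Classification problems for systems of forms and linear mappings''). Your closing sentence points at the right body of work but at the wrong bibliography item --- \cite{bel-ser} is the complexity-of-matrix-problems paper and contains no such decomposition theorem. More importantly, the self-contained sketch you offer in its place has concrete gaps. First, the involution $f\mapsto f^*$ on $\operatorname{End}_{\mathbb F}(V)\cong M_n(\mathbb F)$ is blind to the tuple $\underline A$: a system of idempotents with $e_j^*=e_{\sigma(j)}$ decomposes the standard form $\bar x^Ty$, not $(V,\underline A)$. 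A *congruence splitting corresponds to idempotents satisfying $A_ie_j=e_j^*A_i$ for all $i$, i.e.\ idempotents fixed by the \emph{adjoint} involution, and that involution is only defined when one has a nondegenerate form to invert --- yet the lemma must cover tuples of singular matrices (in the paper it is applied to $(1,0)$, $(0,1)$, and ${\cal T}_0(A,B)$, whose second matrix is singular). Splitting off the degenerate part is a genuine step in \cite{ser1}, not a formality. Second, the averaging $e\mapsto\tfrac12(e+e^*)$ destroys idempotency ($\tfrac14(e+ee^*+e^*e+e^*)$ is not idempotent in general); the standard argument symmetrizes idempotents only modulo the radical of the endomorphism ring and then lifts, and that is where $\operatorname{char}\mathbb F\ne2$ actually enters. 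Third, uniqueness does not ``reduce to uniqueness in the ordinary Krull--Schmidt theorem'': after matching the underlying indecomposables one must still show that isomorphic orthogonal summands are *congruent, which amounts to classifying Hermitian forms over $\operatorname{End}(X)/\operatorname{rad}$ for each indecomposable $X$; this Witt-cancellation step is precisely the law-of-inertia content and is where algebraic closedness of $\mathbb F$ is used. So either cite \cite{ser1} as the paper does, or fill in these three steps explicitly.
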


\begin{proof}
(a) Each $t$-tuple of $m\times n$
matrices determines the $t$-tuple of
linear mappings ${\mathbb F}^n\to
{\mathbb F}^m$; that is, the
representation of the quiver
consisting of two vertices $1$ and
$2$ and $t$ arrows $1\longrightarrow
2$. By the Krull--Schmidt theorem
\cite[Section 8.2]{pie}, every
representation of a quiver is
isomorphic to a direct sum of
indecomposable representations,
which are determined uniquely up to
permutation and replacement by
isomorphic representations.

(b) This statement is a special case
of the following generalization of
the law of inertia for quadratic
forms \cite[Theorem 2 and
\S\,2]{ser1}: each system of linear
mappings and sesquilinear forms on
vector spaces over $\mathbb F$
decomposes into a direct sum of
indecomposable systems uniquely up
to isomorphisms of summands.
\end{proof}

\begin{proof}[Proof of Theorem
\ref{th2.1}] (a) If $(A,B)$ is
similar to $(C,D)$, then ${\cal
T}_{\varepsilon}(A,B)$ is *congruent
to ${\cal T}_{\varepsilon}(C,D)$
since $S^{-1}(A,B)S=(C,D)$ implies
\begin{equation}\label{equ10}
R^*{\cal
T}_{\varepsilon}(A,B)R={\cal
T}_{\varepsilon}(C,D),\qquad
R:=\diag
((S^*)^{-1},(S^*)^{-1},S,\,S).
\end{equation}

Conversely, let ${\cal
T}_{\varepsilon}(A,B)$ be congruent
to ${\cal T}_{\varepsilon}(C,D)$,
this means that \[R^*{\cal
T}_{\varepsilon}(A,B)R={\cal
T}_{\varepsilon}(C,D)\] for some
nonsingular $R$. Then also
\begin{gather*}
R^*{\cal
P}_{\varepsilon}(A,B)R={\cal
P}_{\varepsilon}(C,D),
   \\
{\cal P}_{\varepsilon}(x,y):= \left(
\left[\begin{array}{c|c}
  \text{\Large\rm
0}&\begin{matrix}
    1&0\\0&1
  \end{matrix}\\ \hline
  \begin{matrix}
    2&1\\0&2
  \end{matrix}&\text{\Large\rm
0}
\end{array}\right],\
\left[\begin{array}{c|c}
  \text{\Large\rm
0}&\begin{matrix}
    2&0\\1&2
  \end{matrix}\\ \hline
  \begin{matrix}
    1&0\\0&1
  \end{matrix}&\text{\Large\rm
0}
\end{array}\right],\
\left[\begin{array}{c|c}
  \text{\Large\rm
0}&\begin{matrix}
    x&0\\0&y
  \end{matrix}\\ \hline
  \begin{matrix}
    \varepsilon x^*&0\\
    0& \varepsilon y^*
  \end{matrix}&\text{\Large\rm
0}
\end{array}\right]\right).
\end{gather*}
Hence, ${\cal
P}_{\varepsilon}(A,B)\sim{\cal
P}_{\varepsilon}(C,D)$ (in the
notation \eqref{er}), and so
\begin{gather} \label{err}
{\cal F}(A,B)\oplus {\cal
G}_{\varepsilon}(A,B)\sim {\cal
F}(C,D)\oplus {\cal
G}_{\varepsilon}(C,D),
  \\  \nonumber
 {\cal
F}(x,y):= \left(
\begin{bmatrix}
    1&0\\0&1
  \end{bmatrix},\
\begin{bmatrix}
    2&0\\1&2
  \end{bmatrix},\
\begin{bmatrix}
    x&0\\0&y
  \end{bmatrix}\right),
          \\ \nonumber
  {\cal G}_{\varepsilon}(x,y):= \left(
  \begin{bmatrix}
    2&1\\0&2
  \end{bmatrix},\
  \begin{bmatrix}
    1&0\\0&1
  \end{bmatrix},\
  \begin{bmatrix}
    \varepsilon x^*&0\\
    0& \varepsilon y^*
  \end{bmatrix}\right).
\end{gather}
The equivalence
\[
{\cal G}_{\varepsilon}(C,D)\sim
\begin{bmatrix}
    2I_n&I_n\\0&2I_n
  \end{bmatrix}^{-1}\!\!\!\!
{\cal G}_{\varepsilon}(C,D)
 = \left(
\begin{bmatrix}
    I_n&0\\0&I_n
  \end{bmatrix},\
\begin{bmatrix}
    I_n/2&-I_n/4\\0&I_n/2
  \end{bmatrix},\
\dots\right)
\]
ensures that there are no triples
$\cal H$ (with matrices of size not
$0\times 0$), ${\cal H}_1$, and
${\cal H}_2$ such that
\[
{\cal F}(A,B)\sim {\cal H}\oplus
{\cal H}_1 \quad\text{and}\quad
{\cal G}_{\varepsilon}(C,D)\sim
{\cal H}\oplus {\cal H}_2.
\]
The same holds for ${\cal F}(C,D)$
and ${\cal G}_{\varepsilon}(A,B)$.
By \eqref{err} and Lemma
\ref{lem2}(a), ${\cal
F}(A,B)\sim{\cal F}(C,D)$; that is,
$R{\cal F}(A,B)={\cal F}(C,D)S$ for
some nonsingular $R$ and $S$.
Equating the corresponding matrices
of these triples gives
\[
RI_{2n}=I_{2n}S,\quad
R\begin{bmatrix}
2I_n&0\\I_n&2I_n\end{bmatrix}=
\begin{bmatrix}
2I_n&0\\I_n&2I_n\end{bmatrix}S,\quad
R\begin{bmatrix}
A&0\\0&B\end{bmatrix}=
\begin{bmatrix}
C&0\\0&D\end{bmatrix}S.
\]
By the first equality, $R=S$. By the
second equality,
\[
S=\begin{bmatrix} P&0\\Q&P\end{bmatrix}
\]
for some $P$ and $Q$. By the last
equality, $P(A,B)=(C,D)P$; that is,
$(A,B)$ is similar to $(C,D)$.
\medskip

(b) If the involution on $\mathbb F$
is not the identity and $\varepsilon
=1$, then the second matrix in
\eqref{eqv2} is Hermitian. If the
involution on $\mathbb F$ is the
identity and $\varepsilon =\pm 1$,
then the second matrix in
\eqref{eqv2} is symmetric or
skew-symmetric. This proves the
statement (b) for the pairs
{\rm(i)--(iii)} from Section
\ref{s0}.
\end{proof}

\section{Algebras} \label{s1}

An \emph{algebra} (without the
identity) is a vector space $R$ over
$\mathbb F$ with multiplication
$(u,w)\mapsto uv\in R$ being
bilinear and associative; this means
that
\begin{gather*}\label{q1}
(au+bv) w =a (uw) +b(vw),
 \qquad
u(av+bw) =a(uv)+b(uw),\\
(uv)w=u(vw)
\end{gather*}
for all $a,b\in\mathbb F$ and all
$u,v,w \in R$. Denote by $R^2$ and
$R^3$ the vector spaces spanned by
all $uv$ and, respectively, by all
$uvw$.

An algebra $\Lambda$ that contains
the identity $1$ is called
\emph{local} if the set of its
noninvertible elements is closed
under addition. Then this set is the
\emph{radical}, is denoted by $\Rad
\Lambda$, and $\Lambda/\Rad \Lambda$
is isomorphic to $\mathbb F$ (see
\cite[Section 5.2]{pie}).

\begin{theorem} \label{theor}
Let $\mathbb F$ be an algebraically
closed field of characteristic not
two.

{\rm(a)} The problem of classifying
algebras $R$ $($without the
identity$)$ over $\mathbb F$ with
$R^3=0$ and $\dim R^2=2$ is wild.

{\rm(b)} The problem of classifying
local algebras $\Lambda$ over
$\mathbb F$ with $(\Rad\Lambda)^3=0$
and $\dim (\Rad\Lambda)^2=2$ is
wild.
\end{theorem}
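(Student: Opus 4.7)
My plan is to prove (a) directly and deduce (b) by adjoining an identity. Given a pair $(A,B)$ of $n\times n$ matrices, I set $(M_1,M_2):=\mathcal{T}_0(A,B)$ as in Theorem~\ref{th2.1}, with $\varepsilon=0$ and the identity involution; thus $M_1=J$ is a fixed invertible $4n\times 4n$ matrix, while $M_2=L(A,B)$ satisfies $L(A,B)^2=0$. I then define the algebra without identity
\[
R(A,B):=V\oplus W,\quad V=\mathbb{F}^{4n},\ W=\mathbb{F}^2,
\]
with bases $e_1,\dots,e_{4n}$ of $V$ and $w_1,w_2$ of $W$, and the only nonzero products being
\[
e_i\cdot e_j:=(M_1)_{ij}\,w_1+(M_2)_{ij}\,w_2,\qquad R\cdot W=W\cdot R=0.
\]
Associativity is automatic since $R^3\subseteq W\cdot R=0$, and the linear independence of $M_1$ and $M_2$ forces $R^2=W$, so $\dim R^2=2$.

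\textbf{Isomorphism analysis.} Since $W=R^2$ is invariantly defined, every algebra isomorphism $\phi\colon R(A,B)\to R(C,D)$ is determined by a pair $(S,T)\in\mathrm{GL}_{4n}(\mathbb{F})\times\mathrm{GL}_2(\mathbb{F})$; translating $\phi(v\cdot v')=\phi(v)\cdot\phi(v')$ into matrices yields
\[
S^T M'_k S=t_{k1}M_1+t_{k2}M_2,\quad k=1,2,
\]
with $(M'_1,M'_2)=\mathcal{T}_0(C,D)$ and $T=[t_{kl}]$. The matrix $M_1=J$ is invertible while $M_2=L(A,B)$ is not (being nilpotent), so the line of singular elements of the pencil $\mathrm{span}(M_1,M_2)$ equals $\mathbb{F}\cdot M_2$; since $M\mapsto S^T M S$ preserves rank, this forces $t_{21}=0$. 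A finer *\!congruence invariant---the spectrum of the cosquare $M^{-T}M$, which agrees with that of $J^{-T}J$ on $\alpha J$ but changes when a nonzero multiple of $L(A,B)$ is added (for $(A,B)$ in a Zariski-dense set)---forces $t_{12}=0$ as well. Hence $T=\diag(\alpha,\gamma)$ with $\alpha,\gamma\neq 0$.

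\textbf{Reduction and conclusion.} With $T$ diagonal, the conditions read $S^T J S=\alpha J$ and $S^T L(C,D) S=\gamma L(A,B)=L(dA,dB)$ for $d:=\gamma/\alpha$. Rescaling $S':=\alpha^{-1/2}S$ gives $(S')^T J S'=J$ and $(S')^T L(C,D) S'=L(dA,dB)$, i.e.\ $\mathcal{T}_0(C,D)$ is *\!congruent to $\mathcal{T}_0(dA,dB)$, and Theorem~\ref{th2.1}(a) then yields $(C,D)\sim(dA,dB)$. Thus $(A,B)\mapsto R(A,B)$ induces an injection from similarity-and-scaling classes of matrix pairs into isomorphism classes of algebras in the target class. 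Since classification of pairs of $n\times n$ matrices up to similarity and simultaneous nonzero scaling is still wild (scaling is only a $1$-parameter action; one may restrict to the wild family of pairs with $\mathrm{tr}(A)\neq 0$, normalize $\mathrm{tr}(A)=n$, and kill the scaling), the classification of algebras $R$ with $R^3=0$ and $\dim R^2=2$ is wild, giving~(a). For (b), set $\Lambda:=\mathbb{F}\cdot 1\oplus R(A,B)$ with the standard multiplication; then $\Lambda$ is local with $\Rad\Lambda=R(A,B)$, $(\Rad\Lambda)^3=0$, $\dim(\Rad\Lambda)^2=2$, and $\Lambda\cong\Lambda'$ iff the radicals are isomorphic as algebras without identity, so (a) implies (b). The principal obstacle is the step forcing $t_{12}=0$: the rank argument giving $t_{21}=0$ does not immediately extend, and one needs a finer *\!congruence invariant---cosquare spectrum, or a direct block analysis exploiting the block-antidiagonal structure of $J$ together with the nilpotence of $L$---to exclude the term $\beta L(A,B)$ on a Zariski-dense set of pairs.
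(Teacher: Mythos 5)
Your strategy---realize the algebra directly from the pencil spanned by $(M_1,M_2)={\cal T}_0(A,B)$ and control the induced substitution matrix $T=[t_{kl}]$ by congruence invariants of that pencil---is sound in outline and close in spirit to the paper's Lemma \ref{l5z}, but it has a genuine gap exactly where you flag it: the step forcing $t_{12}=0$. Even the rank argument for $t_{21}=0$ is not quite as stated (the singular elements of the pencil are not only the multiples of $M_2$: for $t_{21}\ne 0$ the combination $t_{21}M_1+t_{22}M_2$ has rank $2n+\rank(t_{21}I_n+t_{22}A)+\rank(t_{21}I_n+t_{22}B)$, so one must separately exclude the degenerate case $A=B=-(t_{21}/t_{22})I_n$), and the proposed cosquare invariant for $t_{12}$ is never computed, is claimed to work only on a Zariski-dense set of pairs, and restricting the reduction to such a subset---as well as passing from similarity to similarity-plus-scaling, since your construction only recovers $(C,D)\sim(dA,dB)$ for some $d\ne 0$---each requires a further argument that the restricted or quotiented problem is still wild. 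None of these steps is visibly false, but as written the proof is incomplete at its central point.

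The paper avoids all of this with one device: instead of ${\cal T}_0(x,y)$ alone it encodes the algebra by the padded pair ${\cal P}(x,y)=(I_{20},0_{20})\oplus(0_{10},I_{10})\oplus(I_1,I_1)\oplus{\cal T}_0(x,y)$ of \eqref{5.9'}, after first reducing (via Lemma \ref{l5z}) the classification of all such algebras to the problem of classifying linearly independent pairs up to congruence and substitutions \eqref{1.3z}. The summands $(I_{20},0_{20})$ and $(0_{10},I_{10})$ push the ranks of the two matrices of ${\cal P}(A,B)$ so far apart that the equalities $\rank(r_{11}M_1+r_{12}M_2(A,B))=\rank M_1$ and $\rank(r_{21}M_1+r_{22}M_2(A,B))=\rank M_2(C,D)$ force $r_{12}=r_{21}=0$ for every pair $(A,B)$, with no genericity hypothesis and no pencil invariants. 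The summand $(I_1,I_1)$ then kills the residual scaling: if $a=r_{22}/r_{11}\ne 1$, the uniqueness of direct-sum decompositions under congruence (Lemma \ref{lem2}(b)) would make $(1,a)$ a direct summand of ${\cal T}_0(C,D)$, which is impossible because $F-F^T$ is nonsingular. To complete your version you should either carry out the cosquare computation honestly and then prove wildness of the restricted, scaling-quotiented similarity problem, or simply import the paper's padding trick, which settles $t_{12}=0$ and $d=1$ for all pairs at once.
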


Due to the next lemma, the statement
(a) ensures (b).

\begin{lemma} \label{l5za}
If $R$ is an algebra from Theorem
{\rm\ref{theor}(a)}, then $R$ is the
radical of some local algebra
$\Lambda$ from Theorem
{\rm\ref{theor}(b)}, and $\Lambda$
is fully determined by $R$.
\end{lemma}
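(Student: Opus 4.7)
The plan is to construct $\Lambda$ as the standard unitalization of $R$, and to show uniqueness by observing that a local algebra is determined by its radical once the quotient is fixed to be $\mathbb F$.

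\textbf{Construction.} Form the vector space $\Lambda := \mathbb F\oplus R$ with multiplication
\[
(a,u)(b,v):=(ab,\,av+bu+uv),\qquad a,b\in\mathbb F,\ u,v\in R.
\]
Associativity follows from the associativity of $R$ together with the fact that scalar multiplication by $\mathbb F$ on $R$ is associative with the ring multiplication (which is part of the axioms of an $\mathbb F$-algebra, as recalled in the introduction). The element $1_\Lambda:=(1,0)$ is a two-sided identity, so $\Lambda$ is an algebra with unit.

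\textbf{Locality and radical.} I claim that the set of noninvertible elements of $\Lambda$ is precisely $N:=\{(0,u):u\in R\}$, which is closed under addition and equals $R$ under the obvious identification. Indeed, if $a\ne 0$, write $(a,u)=a(1,a^{-1}u)$; since $(a^{-1}u)^3\in R^3=0$, the element $(1,s):=(1,a^{-1}u)$ has the two-sided inverse $(1,-s+s^2)$ because $(1+s)(1-s+s^2)=1+s^3=1$. Conversely, any element of $N$ is noninvertible: if $(0,u)(b,v)=(1,0)$ then equating the $\mathbb F$-components gives $0=1$. Hence $\Lambda$ is local with $\Rad\Lambda=N\cong R$, so $(\Rad\Lambda)^3=R^3=0$ and $\dim(\Rad\Lambda)^2=\dim R^2=2$.

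\textbf{Uniqueness.} Suppose $\Lambda'$ is any local algebra with $\Rad\Lambda'=:R'$ satisfying $(R')^3=0$, $\dim(R')^2=2$, and with $R'$ identified with $R$ as an algebra without unit. Since $\Lambda'$ is local and $\mathbb F$ is the residue field (see \cite[Section 5.2]{pie}), we have $\Lambda'=\mathbb F\cdot 1_{\Lambda'}\oplus R'$ as vector spaces. The multiplication in $\Lambda'$ is completely determined by the multiplication in $R'$ together with the requirement that $1_{\Lambda'}$ be the identity, which forces $(a\cdot 1_{\Lambda'}+u)(b\cdot 1_{\Lambda'}+v)=ab\cdot 1_{\Lambda'}+av+bu+uv$. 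This is exactly the multiplication used above, so the map sending $(a,u)\in\Lambda$ to $a\cdot 1_{\Lambda'}+u\in\Lambda'$ is an isomorphism of algebras. Thus $\Lambda$ is determined by $R$ up to isomorphism.

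The construction and the verification of the bullets are essentially routine; the only step that uses the hypothesis $R^3=0$ in an essential way is the construction of inverses for $(a,u)$ with $a\ne 0$, and this is where I would expect a careful reader to want to see the explicit inverse written down.
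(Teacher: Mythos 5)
Your proposal is correct and follows essentially the same route as the paper: both adjoin an identity to $R$ via the unitalization $\mathbb F\oplus R$ with the multiplication $(a,u)(b,v)=(ab,\,av+bu+uv)$ and observe that the resulting algebra is local with radical $R$. Your write-up merely supplies details the paper leaves implicit, namely the explicit inverse $(1,-s+s^2)$ of $(1,s)$ using $s^3=0$ and the argument that the unital structure on $\mathbb F\cdot 1\oplus R$ is forced, which settles the uniqueness claim.
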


\begin{proof}
Let $R$ be an algebra for which
$R^3=0$ and $\dim R^2=2$. We
``adjoin'' the identity $1$ by
considering the algebra $\Lambda$
consisting of the formal sums
\[
a 1+u\qquad (a\in\mathbb F,\ u\in R)
\]
with the componentwise addition and
scalar multiplication and the
multiplication
\[
(a 1+u)(b 1+v)= ab 1+(a v+b u+uv).
\]
The algebra $\Lambda$ is local since
$R$ is the set of its noninvertible
elements.
\end{proof}

The next lemma reduces the problem
of classifying algebras from Theorem
\ref{theor}(a) to a matrix problem.

\begin{lemma} \label{l5z}
Every algebra $R$ for which $R^3=0$
and $\dim R^2=2$ is isomorphic to
exactly one algebra on\/ ${\mathbb
F}^{2+n}$ for some $n\ge 2$ with
multiplication
\begin{equation}\label{5.1z}
uv=\left(u^T\!\begin{bmatrix}
0_2&0\\0&A
\end{bmatrix}\!v,\
u^T\!\begin{bmatrix} 0_2&0\\0&B
\end{bmatrix}\!v,\
0,\dots,0\right)^T
\end{equation}
given by $n$-by-$n$ matrices $A$ and
$B$ that are linearly independent:
\begin{equation*}\label{5.2z}
a A+bB=0 \qquad\Longrightarrow\qquad
a=b=0.
\end{equation*}
The pair $(A,B)$ is determined by $R$
uniquely up to congruence and linear
substitutions
\begin{equation}\label{1.3z}
(A,B)\longmapsto (r_{11} A+r_{12} B,\
r_{21} A+r_{22} B),
\end{equation}
in which the matrix $[r_{ij}]$ must be
nonsingular.
\end{lemma}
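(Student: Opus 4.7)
The plan is to reduce the algebra structure on $R$ to a bilinear map $V\times V\to R^2$ on a vector-space complement $V$ of $R^2$, express this map in coordinates to obtain the pair $(A,B)$, and then track how $(A,B)$ changes under the remaining freedom in the choice of bases.

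First I would exploit $R\cdot R^2\subseteq R^3=0$ and $R^2\cdot R\subseteq R^3=0$, so that $R^2$ lies in the two-sided annihilator of $R$. Fixing any complement $V$ of $R^2$, the product $uv$ depends only on the $V$-components of $u$ and $v$, producing a bilinear map $\mu\colon V\times V\to R^2$ whose image spans $R^2$. From $\dim R^2=2$ I would deduce $n:=\dim V\ge 2$: if $\dim V\le 1$ then $\mu$ is given by at most one bilinear form, forcing $\dim R^2\le 1$. Choosing bases $e_1,e_2$ of $R^2$ and $e_3,\dots,e_{n+2}$ of $V$ identifies $R$ with $\mathbb F^{2+n}$; writing $\mu$ in coordinates yields two $n\times n$ matrices $A,B$, and the block form \eqref{5.1z} is forced by the annihilation of the first two coordinates. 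The linear independence of $A$ and $B$ is equivalent to the two coordinate forms of $\mu$ being linearly independent, i.e.\ to $\dim R^2=2$. Conversely, any linearly independent pair $(A,B)$ with $n\ge 2$ defines via \eqref{5.1z} an algebra with $R^3=0$ (products land in the first two coordinates, which are then annihilated by any further factor) and $\dim R^2=2$; associativity is automatic since $R^3=0$.

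For uniqueness I would enumerate the choices of bases. A change of basis of $V$ by a nonsingular $S\in\mathbb F^{n\times n}$ transforms $(A,B)\mapsto(S^TAS,\,S^TBS)$, which is simultaneous congruence. A change of basis of $R^2$ by a nonsingular $T\in\mathbb F^{2\times 2}$ multiplies the coordinates of every product by $T^{-1}$, yielding the linear substitution \eqref{1.3z} with $[r_{ij}]=T^{-1}$. It then remains to check that the choice of complement $V$ itself does not affect $(A,B)$: any other complement has the form $V'=\{v+\phi(v):v\in V\}$ for a linear $\phi\colon V\to R^2$, and because every term in a product involving $\phi$ carries a factor in the annihilator $R^2$, the bijection $v\mapsto v+\phi(v)$ preserves $\mu$. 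Combined with the fact that any algebra isomorphism preserves the intrinsic subspace $R^2$ (and therefore, after identifying the two copies of $R^2$ by the induced map, reduces to the changes of bases above), this shows that $(A,B)$ is determined by $R$ up to simultaneous congruence and the substitutions \eqref{1.3z}. I expect the verification that the choice of complement $V$ is irrelevant to be the most delicate step, since this is precisely where the hypothesis $R^3=0$ is used essentially, via the annihilation identities that kill all cross terms produced by $\phi$.
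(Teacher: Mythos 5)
Your proof is correct and takes essentially the same route as the paper: pick a basis of $R^2$, extend to a basis of $R$, use $R\cdot R^2=R^2\cdot R=0$ (from $R^3=0$) to force the block form \eqref{5.1z}, and then observe that a base change in the complement gives simultaneous congruence while a base change in $R^2$ gives the substitutions \eqref{1.3z}. You are in fact slightly more careful than the paper, which does not explicitly address the dependence on the choice of complement of $R^2$; your remark that a different complement differs by a map $\phi$ into the annihilator, so that all cross terms vanish, is precisely the justification for that omitted step.
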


\begin{proof}
Let $R$ be an algebra of dimension
$n+2$ such that $R^3=0$ and $\dim
R^2=2$. Choose a basis $e_1,e_2$ of
$R^2$ and complete it to a basis
\begin{equation}\label{bbb}
 e_1,\ e_2,\ f_1,\dots,f_{n}
\end{equation}
of $R$. Since $e_1,e_2\in R^2$ and
$R^3=0$,
\begin{equation}\label{5.vz}
e_ie_j=0,\qquad e_if_j=0,\qquad f_if_j
=a_{ij}e_1+b_{ij}e_2,
\end{equation}
in which $A=[a_{ij}]$ and
$B=[b_{ij}]$ are some $n$-by-$n$
matrices. Representing the elements
of $R$ by their coordinate vectors
with respect to the basis
\eqref{bbb} and using \eqref{5.vz},
we obtain \eqref{5.1z}. A change of
the basis $e_1,e_2$ of $R^2$ reduces
$(A,B)$ by transformations
\eqref{1.3z}. A change of the basis
vectors $f_1,\dots,f_n$ reduces
$(A,B)$ by congruence
transformations. The matrices $A$
and $B$ are linearly independent due
to \eqref{5.vz} and the condition
$\dim R^2= 2$.
\end{proof}

\begin{proof}
[Proof of Theorem \ref{theor}] Due
to Lemmas \ref{l5za} and \ref{l5z},
it suffices to prove that the
problem of classifying pairs of
matrices up to congruence and
substitutions \eqref{1.3z} is wild.
Its wildness is proved in much the
same way as \cite[Theorem
4.5]{bel-ser}.

Consider the pair
\begin{equation}\label{5.9'}
{\cal P}(x,y) :=(I_{20},0_{20})
\oplus (0_{10}, I_{10})\oplus
(I_1,I_1)\oplus {\cal T}_0(x,y)
\end{equation}
of 35-by-35 matrices, in which
${\cal T}_0(x,y)$ is defined in
\eqref{eqv1}. Let us prove that
matrix pairs $(A,B)$ and $(C,D)$ are
similar if and only if ${\cal
P}(A,B)$ reduces to ${\cal P}(C,D)$
by transformations of congruence and
substitutions \eqref{1.3z}.

If $(A,B)$ is similar to $(C,D)$,
that is, $S^{-1}(A,B)S=(C,D)$ for
some nonsingular $S$, then ${\cal
T}_0(A,B)$ is congruent to ${\cal
T}_0(C,D)$ by \eqref{equ10}, and so
${\cal P}(A,B)$ is congruent to
${\cal P}(C,D)$.

Conversely, assume that ${\cal
P}(A,B)$ reduces to ${\cal P}(C,D)$
by congruence transformations and
substitutions \eqref{1.3z}; we need
to prove that $(A,B)$ is similar to
$(C,D)$. These transformations are
independent: we can first produce
substitutions and obtain
\begin{equation}\label{eq30}
(r_{11}M_1+r_{12}M_2(A,B),\
r_{21}M_1+r_{22}M_2(A,B))
\end{equation}
where $M_1$ and $M_2(A,B)$ are the
first and the second matrices of the
pair ${\cal P}(A,B)$ and $[r_{ij}]$
is nonsingular, and then congruence
transformations and obtain
\begin{equation*}\label{eq29}
{\cal P}(C,D)=(M_1,\,M_2(C,D)).
\end{equation*}
Clearly,
\begin{align*}
\rank{(r_{11}M_1+r_{12}M_2(A,B))}=&
\rank{M_1},\\
\rank{(r_{21}M_1+r_{22}M_2(A,B))}=&
\rank{M_2(C,D)}.
\end{align*}
Since ${\cal P}(x,y)$ is defined by
\eqref{5.9'}, these equalities imply
$r_{ij}=0$ if $i\ne j$; that is,
${\cal P}(C,D)$ is congruent to
$(r_{11}M_1,\, r_{22}M_2(A,B)),$
which is congruent to
$r_{11}^{-1}(r_{11}M_1,\,
r_{22}M_2(A,B))$ because $\mathbb F$
is algebraically closed. We have
that
\begin{equation}\label{5a}
\text{${\cal P}(C,D)\ $ is congruent
to $\ (M_1,\,aM_2(A,B))$,}
\end{equation}
where $a=r_{22}/r_{11}$.

We say that a pair $P$ \emph{has a
direct summand} $D$ if $P$ is
congruent to a direct sum with the
summand $D$. By \eqref{5.9'},
$(M_1,\,M_2(A,B))$ has the direct
summand $(1,1):= (I_1,\,I_1)$, and
so $(M_1,\,aM_2(A,B))$ has the
direct summand $(1,a)$. By
\eqref{5a}, ${\cal P}(C,D)$ has the
direct summand $(1,a)$ too.

Assume that $a\ne 1$. The pair
${\cal P}(C,D)$ is congruent to a
direct sum of ${\cal T}_0(C,D)$ and
pairs of the form $(1,0)$, $(0,1)$,
and $(1,1)$. Since $a\ne 1$ and by
Lemma \ref{lem2}(b), $(1,a)$ is a
direct summand of ${\cal T}_0(C,D)$.
Then the same holds for their first
matrices; that is, $I_1$ is a direct
summand of
\begin{equation*}\label{equ11}
F:=\left[\begin{array}{c|c}
  \text{\Large\rm
0}&\begin{matrix}
    1&0\\0&1
  \end{matrix}\\ \hline
  \begin{matrix}
    2&1\\0&2
\end{matrix}&\text{\Large\rm 0}
  \end{array}\right].
\end{equation*}
This means that $S^TFS=I_1\oplus G$
for some $G$ and a nonsingular $S$.
Hence,
\[
S^T(F-F^T)S=(I_1-I_1^T)\oplus (G-G^T)=
0_1\oplus (G-G^T);
\]
this is impossible since $F-F^T$ is
nonsingular.

Hence $a=1$ and by \eqref{5a} ${\cal
P}(A,B)$ is congruent to ${\cal
P}(C,D).$ Due to \eqref{5.9'}, all
the direct summands of ${\cal
P}(A,B)$ and ${\cal P}(C,D)$
coincide except for ${\cal
T}_0(A,B)$ and ${\cal T}_0(C,D)$. By
Lemma \ref{lem2}(b), the pairs
${\cal T}_0(A,B)$ and ${\cal
T}_0(C,D)$ are congruent. By Theorem
\ref{th2.1}, $(A,B)$ is similar to
$(C,D)$.
\end{proof}


\begin{thebibliography}{9}

\bibitem{bel-ser}
G. R. Belitskii, V. V. Sergeichuk,
Complexity of matrix problems,
\emph{Linear Algebra Appl.} 361 (2003)
203--222.

\bibitem{pie}
R. S. Pierce, \emph{Associative
Algebras.} Springer-Verlag, 1982.

\bibitem{rin}
C. M. Ringel, \emph{Tame Algebras
and Integral Quadratic Forms}, in:
Lecture Notes in Mathematics, vol.
1099, Springer, Berlin, 1984.

\bibitem{ser2}
V. V. Sergeichuk, \emph{Classification
problems for systems of linear mappings
and sesquilinear forms.} Manuscript No.
196 Uk-D84, deposited at the Ukrainian
NIINTI, 1984 (in Russian); R. Zh. Mat.
1984, 7A331.

\bibitem{ser1}
V. V. Sergeichuk, Classification
problems  for systems of forms and
linear mappings, \emph{Math. USSR,
Izvestiya} 31 (no. 3) (1988) 481--501.

\end{thebibliography}
\end{document}